\newcommand{\abs}[1]{\left| #1 \right|}
\newcommand{\ex}{\mathbf{E}}
\newcommand{\pr}{\mathbf{P}}
\newcommand{\R}{\mathbb{R}}
\newcommand{\C}{\mathbb{C}}
\newcommand{\Rd}{{\R^d}}
\newcommand{\Rt}{{\R^2}}
\newcommand{\M}{{\mathcal{M}}}
\newcommand{\N}{{\mathcal{N}}}
\newcommand{\F}{{\mathcal{F}}}
\newcommand{\LL}{{L}}
\newcommand{\Lp}{{\LL^p}}
\newcommand{\Cinfc}{{C^\infty_c}}
\newcommand{\Lo}{{\LL^1}}
\newcommand{\Lt}{{\LL^2}}
\newcommand{\A}{{\mac{A}}}
\numberwithin{equation}{section}
\newcommand{\ind}{\mathbf{1}}
\newcommand{\wt}{\widetilde}
\newcommand{\pl}{\|}
\newcommand {\mac}[1] { \mathbb{#1} }
\newcommand{\sphere}{{\mathbb{S}}}
\newcommand{\Beta}{\mathcal{B}}
\newtheorem{theorem}{Theorem}[section]
\newtheorem{lemma}{Lemma}[section]
\begin{document}
\sloppy
\thispagestyle{empty}

 
\title{Fourier multipliers for non-symmetric L\'evy processes}
\author{
Rodrigo Ba\~nuelos\thanks{Supported in part by NSF grant \#0603701-DMS.},
 Adam Bielaszewski\thanks{Supported by grant KBN 1 P03A 026 29.},
 Krzysztof Bogdan\thanks{Supported in part by KBN 1 P03A 026 29 and MNiSW N N201 397137.}}
\date{\today}
\maketitle


\begin{abstract}
{\it We study Fourier multipliers resulting from martingale transforms of general L\'evy processes.}
\end{abstract}
\footnotetext{2000 {\it MS Classification}: 42B15, 60G51 (Primary), 60G46, 42B20 (Secondary).\\ {\it Key words and phrases}: Fourier multiplier, process with independent increments, martingale transform, singular integral.}


\section{Introduction}

For each bounded function $M:\,\Rd\to \C$ there is a unique bounded
linear operator $\M$ on $\Lt(\Rd) $ defined in terms of the Fourier
transform as follows,
\begin{equation}
  \label{eq:dmf}
\widehat{\M g}=M\hat g\,.
\end{equation}
The operator norm of $\M$ on $\Lt(\Rd) $ is $\|\M\|=\|M\|_\infty$.
It has long been of interest to study {\it symbols} $M$ for which the {\it Fourier multiplier} $\M$ extends to a bounded linear
operator on $L^p(\Rd)$ for $p\in (1,\infty)$.
Fourier multipliers resulting from transforming jumps of symmetric L\'evy
process have been recently obtained in \cite{MR2345912}.
By using Burkholder's inequalities for differentially subordinate
continuous time martingales with jumps \cite{Burkholder} in the general form of Wang \cite{MR1334160}, 
we proved 
that their operator norms on $L^p(\Rd)$ do not exceed  
\begin{equation}
  \label{eq:psm1}
p^*-1=\max\{p-1, \frac{1}{p-1}\}\,.
\end{equation}
For a broad discussion of Burkholder's method and its many extensions and applications, we refer the reader to \cite{2010RB}.
{In this note
we adapt the methods of \cite{MR2345912} to non-symmetric L\'evy 
processes.} The resulting multipliers are given
by (\ref{eq:fs}) and Theorem~\ref{th:bfm} below.
We remark that for $\mu = 0$ and symmetric $V$
the result was proved in \cite[Theorem~1]{MR2345912}.
The present Theorem~\ref{th:bfm} is a generalization, but the symbols (\ref{eq:fs}) are very similar to those given in \cite{MR2345912}.

Given a Borel measure $V\geq 0$ 
on $\Rd$ such that $ V(\{0\})=0$ and
\begin{equation}
  \label{eq:clm}
\int_\Rd \min(|z|^2,1) V(dz)<\infty
\end{equation}
(that is, a L\'evy measure), a finite Borel measure $\mu\geq
0$ on the unit sphere $\sphere$ in $\Rd$, and Borel measurable complex-valued functions $\phi$ on
$\Rd$ and $\varphi$ on $\sphere$ such that $\|\phi\|_\infty\leq 1$ and $\|\varphi\|_\infty\leq 1$, we define
\begin{equation} \label{eq:fs}
M\left(\xi\right)=
\frac{
        \int_\Rd \left[1 - \cos (\xi  ,  z) \right]\phi\left(z\right)V\left(dz\right) 
        +
            \frac{1}{2} \int_{\sphere} \left( \xi ,  \theta \right)^2 \varphi\left( \theta \right) \mu\left(d\theta \right)
          }
     {
     \int_\Rd \left[1 - \cos(\xi  ,  z) \right]V\left(dz\right) 
     +
     \frac{1}{2}\int_{\sphere} \left( \xi ,  \theta \right)^2 \mu\left(d\theta \right)
     } \ , 
\end{equation} 
where we let $M(\xi)=0$, if the denominator equals zero. Clearly, $\|M\|_\infty\leq 1$. 
Here and for the rest of this paper, the pairing between vectors,
\begin{equation}\label{eq:dsp}
(\xi,\eta)=\sum_{n=1}^d \xi_n\eta_n \ , \quad \mbox{ if } \xi,\eta \in \R^d \mbox{ or } \C^d \ ,
\end{equation}
is without complex conjugation. We also denote $|\xi|^2=\sum_{n=1}^d |\xi_n|^2=(\xi,\overline{\xi})$.
If the denominator in (\ref{eq:fs}) vanishes on a set of positive Lebesgue measure, then $V=0$ (see \cite[Section 3]{MR2345912}), hence $\mu=0$ and  $M\equiv 0$.

\begin{theorem} \label{th:bfm}
If $1<p<\infty$ and $\M$ is defined by
{\rm (\ref{eq:dmf})} and {\rm (\ref{eq:fs})}, then
\begin{equation}
  \label{eq:dM}
\|\M g\|_p\leq (p^*-1)\|g\|_p\,,\quad g\in L^p(\Rd)\,.
\end{equation}
\end{theorem}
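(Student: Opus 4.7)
The strategy is to reproduce the martingale-transform / parabolic-extension argument of \cite{MR2345912}, extending it to accommodate both a non-symmetric L\'evy measure $V$ and the Gaussian term governed by $\mu$. Since $1-\cos(\xi,z)$ is even in $z$, the denominator $\psi(\xi)$ in (\ref{eq:fs}) coincides with the characteristic exponent of the symmetric L\'evy process $X_t$ on $\Rd$ whose L\'evy measure is the symmetrization of $V$ and whose Gaussian covariance is $\int_\sphere \theta\theta^T\,\mu(d\theta)$. In the degenerate case where $\psi$ vanishes on a set of positive measure one has $V=0$, $\mu=0$, and $M\equiv 0$, so assume $\psi>0$ almost everywhere.

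For $f\in\Cinfc(\Rd)$ and $T>0$, set $U(t,x)=E[f(x+X_{T-t})]$ for $0\le t\le T$. Then $M^{f,x}_t:=U(t,x+X_t)$ is a martingale whose L\'evy--It\^o decomposition is
\[
M^{f,x}_T-M^{f,x}_0 = \int_0^T \nabla U(s,x+X_{s-})\cdot dB_s + \int_0^T\!\!\int_\Rd \bigl[U(s,x+X_{s-}+z)-U(s,x+X_{s-})\bigr]\,\tilde N(ds,dz),
\]
where $B$ is the Gaussian part of $X$ (realized against $\mu$) and $\tilde N$ is its compensated Poisson jump measure. Define the transformed martingale $N^{f,x}_t$ by inserting $\varphi(\theta)$ into the continuous integrand in the direction $\theta$, and $\phi(z)$ into the jump integrand. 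Since $\|\phi\|_\infty,\|\varphi\|_\infty\le 1$, the quadratic variation of $N^{f,x}$ is pointwise dominated by that of $M^{f,x}$, so $N^{f,x}$ is differentially subordinate to $M^{f,x}$ in the sense of Wang~\cite{MR1334160}. Applying Burkholder's inequality in Wang's form pointwise in $x$ and integrating in $x$, using translation-invariance of Lebesgue measure (which gives $\|M^{f,x}_T\|_{L^p(dx\,dP)}=\|f\|_p$), yields $\|N^f_T\|_{L^p(dx\,dP)}\le (p^*-1)\|f\|_p$.

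To recognize $\M$ as the Fourier projection of $N^f_T$, introduce the bilinear form $\langle\M_Tf,g\rangle:=\int_\Rd E^x\bigl[N^{f,x}_T\,\overline{g(x+X_T)}\bigr]\,dx$ for $g\in\Cinfc(\Rd)$. H\"older's inequality immediately gives $|\langle\M_Tf,g\rangle|\le(p^*-1)\|f\|_p\|g\|_{p'}$. A direct computation parallel to Section~3 of \cite{MR2345912}, using It\^o isometry, the L\'evy--Khintchine formula, and Plancherel, identifies this pairing as
\[
\langle\M_Tf,g\rangle = (2\pi)^{-d}\int_\Rd \hat f(\xi)\,\overline{\hat g(\xi)}\,M(\xi)\,\bigl(1-e^{-2T\psi(\xi)}\bigr)\,d\xi.
\]
Letting $T\to\infty$ by dominated convergence (using $\psi>0$ a.e.) and appealing to duality yields (\ref{eq:dM}). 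The main obstacle is the last symbol computation in the presence of the Gaussian term: one has to represent $dB_s$ against $\mu(d\theta)$ so that a direction-dependent multiplier $\varphi(\theta)$ can act, and then check via It\^o isometry that its contribution to the Plancherel step is exactly $\tfrac12\int_\sphere(\xi,\theta)^2\varphi(\theta)\,\mu(d\theta)$. The jump contribution follows the symmetric treatment of \cite{MR2345912}, now with $V$ no longer assumed symmetric.
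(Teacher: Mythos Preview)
Your route differs from the paper's. The paper gives a short proof by two reductions: first, an algebraic symmetrization showing that the pure-jump symbol (\ref{eq:fs_bez_mu}) equals $\int[1-\cos(\xi,z)]\phi^*(z)\,\breve V(dz)\big/\int[1-\cos(\xi,z)]\,\breve V(dz)$, where $\breve V$ is the symmetrization of $V$ and $\phi^*=\breve\phi+k\tilde\phi$ (with $\tilde V=k\breve V$, $|k|\le1$) still satisfies $|\phi^*|\le1$; this reduces the jump part to \cite[Theorem~1]{MR2345912} as a black box. Second, the Gaussian term is obtained by approximating $\mu$ with small-jump L\'evy measures $\nu_\varepsilon(dr\,d\theta)=\varepsilon^{-2}\delta_\varepsilon(dr)\,\mu(d\theta)$ and letting $\varepsilon\to0$. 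No new stochastic calculus is carried out in the proof proper; the post-proof discussion in Section~\ref{s:p} then redoes the martingale computation with a genuinely non-symmetric compound Poisson process to explain \emph{why} the asymmetry disappears (the key being (\ref{eq:znikabe})).

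As written, your sketch has a genuine gap in the jump part. You take $X_t$ to be the \emph{symmetric} process with L\'evy measure $\breve V$, and then insert $\phi(z)$ into its jump integrand. The It\^o-isometry/Plancherel step would then produce the numerator $\int[1-\cos(\xi,z)]\phi(z)\,\breve V(dz)=\int[1-\cos(\xi,z)]\breve\phi\,\breve V(dz)$, which is not the target $\int[1-\cos(\xi,z)]\phi\,V(dz)$: the two differ by $\int[1-\cos(\xi,z)]\tilde\phi\,\tilde V(dz)$, generally nonzero. To repair this you must either (i) replace $\phi$ by $\phi^*$ as above and verify $|\phi^*|\le1$ (the paper's actual proof), or (ii) run the martingale argument with the \emph{non-symmetric} process having L\'evy measure $V$ and observe that in the pairing step one gets $|e^{-i(\xi,z)}-1|^2=2[1-\cos(\xi,z)]$ and $|\widehat{p_t}(\xi)|^2=e^{2t\Re\Psi(\xi)}$, so drift and asymmetry drop out automatically (the paper's post-proof computation, culminating in (\ref{eq:znikabe})). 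Your final sentence gestures at (ii) but your setup commits to the symmetric process, so the argument as stated does not close. Separately, ``representing $dB_s$ against $\mu(d\theta)$'' so that $\varphi(\theta)$ can act does require an auxiliary construction (e.g.\ a Gaussian white noise on $(\sphere,\mu)$ with $B_t=\int_\sphere\theta\,W_t(d\theta)$); this is workable, but note that the paper's approximation by $\nu_\varepsilon$ sidesteps it entirely.
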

In particular,  letting $V=0$ in (\ref{eq:fs}) yields the symbol
\begin{equation} \label{BMeq:fs}
M\left(\xi\right)=
\frac{
     \int_{\sphere} \left( \xi ,  \theta \right)^2 \varphi\left( \theta \right) \mu\left(d\theta \right) }
     {
     \int_{\sphere} \left( \xi ,  \theta \right)^2 \mu\left(d\theta \right)
    }\,,
\end{equation}
or 
\begin{equation}\label{eq:pc}
M(\xi)=\frac{(\mac{A}\xi,\xi)}{(\mac{B}\xi,\xi)}\,,
\end{equation}
where $\mac{A}=\left[\mac{A}_{k,l}\right]_{k,l=1, \ldots, d}$ and $\mac{B}=\left[\mac{B}_{k,l}\right]_{k,l=1, \ldots,d}$ are given by
\begin{eqnarray} \label{eq:mx}
 \mac{A}_{k,l} = \int_{\sphere}
   \theta_{k}\theta_{l} \ \varphi\left(\theta\right)\mu(d\theta) \ ,
   \quad 
 \mac{B}_{k,l} =  \int_{\sphere} \theta_{k}\theta_{l} \ \mu(d\theta)  \ .
\end{eqnarray}
For instance, the approach  yields  the bound $p^*-1$ for the multiplier with the symbol $-2\xi_1\xi_2/|\xi|^2$, via
\begin{equation}\label{eq:R2}
\mac{A}=
\left[ \begin{array}{ccccc}
0 & -1 & 0 & \cdots & 0 \\
-1 & 0 & 0 & \cdots & 0  \\
\vdots & \vdots & \vdots & \ddots & \vdots \\
0 & 0 & 0 & \cdots & 0 \end{array} \right] \ , \quad
\end{equation}
and $\mac{B}=\mac{I}$, the identity matrix (\cite{banuelos-mendez}). In this way we obtain $2R_1R_2$, a second order Riesz transform multiplied by two, see Section~\ref{s:d}.
It is known that  the norm of the operator actually equals $p^*-1$ (\cite[Corollary 3.2]{MR2551497}), and so the constant in (\ref{eq:dM}) cannot be improved in general. 
Our method will also give
the upper bound $2(p^*-1)$ for the norm of the multiplier with the symbol $(\xi_1-i\xi_2)^2/|\xi|^2$, via
\begin{equation}\label{eq:BAm}
\mac{A}=
\left[ \begin{array}{cc}
1 & -i \\
-i & -1
\end{array} \right] \quad \mbox{and}\quad \mac{B}=\mac{I}\ ,
\end{equation}
see Section~\ref{BMs:p}. In this connection we remark that $|\mac{A}\xi|=\sqrt{2}|\xi|$ for $\xi \in \R^2$ and $|\mac{A}\xi|\leq 2|\xi|$ for $\xi \in \C^2$. The multiplier is called the Beurling-Ahlfors transform, and its norm is actually smaller than $2(p^*-1)$, see \cite{2010RB}. 
In particular, the celebrated conjecture of T. Iwaniec asserts that the norm equals precisely $p^*-1$. There is some evidence, given by Lemma~\ref{lem:c2} below that our approach cannot improve the bound $2(p^*-1)$.

The paper is organized as follows. 
Section~\ref{BMs:p} has a didactic purpose. We namely consider  $\mac{B}=\mac{I}$ in (\ref{eq:mx}). 
This case can be resolved 
by means of the standard
It\^o calculus for the Brownian motion.   
This argument was first given in \cite{banuelos-mendez},  and has since appeared in many different places and settings, but we believe it is worth repeating here
with notation 
emphasizing analogies with Section~\ref{s:p}. In this way we hope to make the rest of the paper more readable for those less familiar with the stochastic calculus of L\'evy processes. 
In Section~\ref{s:p} we give the proof and a discussion of Theorem~\ref{th:bfm}. First of all, by using a  simple algebra we reduce the symbols (\ref{eq:fs}) to those of \cite[Theorem~1]{MR2345912}. This gives a proof but not much insight, since \cite{MR2345912} only concerns symmetric L\'evy processes. Therefore in the remainder of Section~\ref{s:p} we present stochastic calculus leading to  the symbols (\ref{eq:fs}). Our main purpose is to explain why non-symmetry of the process is not reflected in the symbol. For instance we will see in (\ref{eq:znikabe}) that the drift of the L\'evy process does not contribute to $M$.
Examples  and further discussion are given in Section~\ref{s:d}.

Throughout the paper the functions, measures and sets in $\Rd$ will be assumed Borelian. 
For $1\leq p<\infty$ we denote by $\Lp=\Lp(\Rd)$ the family of all the complex-valued functions $f$ on $\Rd$ which have finite norm
$
\pl g\pl_p =\left[\int_\Rd |f(x)|^p dx\right]^{1/p}
$. As usual, we will identify functions equal almost everywhere. We also denote
$\pl f\pl_\infty ={\mbox{ess sup}}_{x\in \Rd}|f(x)|$, and we let
$\Cinfc$ be the class of all the smooth compactly supported numerical functions on $\Rd$.
We recall that $\Cinfc$ is 
dense in $\Lp$  for  each $p\in [1,\infty)$.
Our convention for the Fourier transform will be 
\begin{displaymath}
  \widehat{f}(\xi)=\int_\Rd e^{i (\xi, x)}f(x)dx\,,\quad \xi\in \Rd\,.
\end{displaymath}
If $\rho$ is a probability measure on $\Rd$ and $k\in \Lo$, then Fubini's theorem yields
\begin{equation}
  \label{eq:wrtlm}
\int_\Rd\int_\Rd k(x+y)\rho(dy)dx=\int k(x)dx\,.
\end{equation}

\section{Brownian martingales and It\^o calculus}\label{BMs:p}
In this section we present a simple approach to Fourier multipliers with symbols of the form
(\ref{eq:pc}).
We will use the familiar It\^o calculus for the Brownian motion, for which we refer the reader to \cite{MR2020294}, \cite{MR1303781}  or \cite{MR1780932}. 
The main ideas will be similar to those in Section~\ref{s:p} below, but the calculations are shorter and simpler. As already mentioned, we hope that this part of the paper will be easier to read for those familiar with the basics of the It\^o calculus but perhaps not as familiar with the stochastic calculus of jump processes used in Section~\ref{s:p}.

We let $\pr$ and $\ex$ be the probability and expectation for a Brownian motion $\left(B_{t},\,t\geq 0\right)$ on $\Rd$. 
We will consider the filtration
\begin{displaymath}
  \F_t=\sigma\{B_{s}\,;\; 0\leq s\leq t\}
\,,\quad t\geq 0\,,
\end{displaymath}
and the Gaussian kernel
\begin{equation}
  \label{BMeq:dp}
p_{t}(x)=(2\pi t)^{-d/2} \exp(-|x|^2/2t)\,,\quad t>0,\;x\in \Rd\,.
\end{equation}
It is well-known that $p_{t}(x)dx$ is the distribution of $B_{t}$ for $t>0$, $p_s*p_t=p_{s+t}$, 
\begin{equation}
\label{BMeq:lkf}
\widehat{p_t}(\xi)
=e^{-t|\xi|^2/2}\,, \quad \xi \in \Rd\,,
\end{equation}
and that the heat equation holds for $p_t(x)$,
\begin{equation}\label{BMeq:pe}
\frac{\partial}{\partial t}p_t(x)=\frac{1}{2}\Delta p_t(x)\,.
\end{equation}
In what follows, $f,g\in \Cinfc$, $x\in \Rd$, and $0\leq t \leq u<\infty$. We denote $p_{0}*f(x)=f(x)$, and we have
\begin{equation}\label{BMeq:nacc}
  \ex f(x+B_{t})=p_{t}*f(x)\,.
\end{equation}
We consider the following Brownian {\it parabolic martingale},
\begin{equation}
  \label{BMeq:dpm}
  F_t=F_t(x;u,f)=\ex\left(f(x+B_u)|\F_t\right)=p_{u-t}*f(x+B_{t})\,.
\end{equation}
Regardless of $t$, the entire time interval $[0,u]$ is involved in $F_t$. Indeed, the ''evolution" from $0$ to $t$ proceeds via the Brownian motion, while that from $t$ to $u$ goes by its expectations.
In fact, the martingale equals an It\^o integral plus a constant, as we verify by applying It\^o formula and (\ref{BMeq:pe}) to the function $(t,y)\mapsto p_{u-t}*f(x+y)$,
\begin{eqnarray}
\nonumber
&&F_t-F_0=\int_0^t \left(\frac{\partial}{\partial v}p_{u-v}*f\right)(x+B_{v})dv
+
\int_0^t \nabla p_{u-v}*f(x+B_{v})dB_v\\
&&+\int_0^t \frac{1}{2}\Delta p_{u-v}*f(x+B_{v})dv
=
\int_0^t\nabla p_{u-v}*f(x+B_{v})dB_v\,.
\label{eq:Ii}
\end{eqnarray}
$F$ is bounded, hence square integrable.
The quadratic variation of $F$ is
\begin{equation}\label{eq:qvGc}
[F,F]_t=|F_0|^2+\int_0^t\left|\nabla p_{u-v}*f(x+B_{v})\right|^2dv\,.
\end{equation}
Let $\mac{A}$ be a real or complex $d\times d$ matrix  such that
\begin{equation}\label{sub1}
|\mac{A}z|\leq |z|\,, \quad z \in \C^d\,.
\end{equation}
We also consider the martingale 
\begin{equation}\label{eq:mtA}
G_t=G_t(x;u,g,\mac{A})=\int_0^t \mac{A}\nabla p_{u-v} g(x+B_{v})dB_v. 
\end{equation}
The quadratic variation of $G$ is 
\begin{equation}
\label{eq:qvF}
[G,G]_t=\int_0^t\left|\mac{A} \nabla p_{u-v} g(x+B_{v})\right|^2dv\,.
\end{equation} 
By (\ref{eq:qvGc}), (\ref{sub1}) and (\ref{eq:qvF}),
$G=G(x;u,g,\mac{A})$ is {\it differentially subordinate} to $H=F(x;u,g)$,  
in the following sense introduced in \cite{banuelos-wang}:
\begin{equation}\label{eq:dsu}
\!\!\!\!\!0\leq [H,H]_t-[G,G]_t \quad \mbox{is non-decreasing in}\; t\,.
\end{equation}
Let $1<p<\infty$ and $p^*=\max\{p-1, (p-1)^{-1}\}$.
By \cite[Theorem~2]{banuelos-wang},
\begin{equation}\label{BMeq:wmt}
   \ex |G_t(x;u,g,\mac{A})| ^p\leq (p^*-1)^p   \,\ex |F_t(x;u,g)|^p\,.
\end{equation}
Let $t=u$. We have $F_u(x;u,f)=f(x+X_{u})$. By (\ref{BMeq:wmt}) and (\ref{eq:wrtlm}),
\begin{eqnarray} 
 \nonumber
\int_\Rd \ex  
|G_u(x;u,g,\mac{A})|^p dx 
&\leq& 
(p^*-1)^p   
\int_\Rd \ex  
|F_u(x;u,g)|^p dx \\
\label{BMeq:psm}
&=&(p^*-1)^p \pl g \pl_p^p\,.
\end{eqnarray}
Let $q=p/(p-1)$. By H\"older's  inequality for $\pr \otimes dx$, (\ref{BMeq:psm}) and (\ref{eq:wrtlm}),
\begin{equation}
\label{MBeq:bof}
\int_\Rd \ex |G_u(x;u,g,\mac{A}) f(x+B_{u})| dx \leq 
(p^*-1)\pl g \pl_p
\pl f \pl_q\,.
\end{equation}
Using (\ref{eq:qvGc}) and (\ref{eq:qvF}) we obtain
\begin{align}
&\ex G_u(x;u,g,\mac{A}) {F_u(x;u,f)}=\ex G_u (F_u-F_0)\nonumber\\
&   = 
\ex \!\!\int_0^u \big(\mac{A}\nabla p_{u-v}*g(x+B_{v}),\nabla p_{u-v}*f(x+B_{v})\big)dv \nonumber \\
&  = \int_0^u \int_\Rd \big(\mac{A}\nabla p_{u-v}*g(x+y),\nabla p_{u-v}*f(x+y)\big)p_v(dy)dv\,.
\label{eq:rsp}
\end{align}
In view of (\ref{MBeq:bof}) we consider
$$
\Lambda(g,f)=\int_\Rd \ex G_u(x;u,g,\mac{A}) F_u(x;u,f) dx \ .
$$
By (\ref{eq:rsp}), (\ref{eq:wrtlm}) and the properties of the Fourier transform we obtain
\begin{align}
\Lambda(g,f)&=\nonumber
\int_0^u\int_\Rd 
\big(\mac{A}\nabla p_{u-v}*g(x), \nabla p_{u-v}*f(x)\big)dxdv\\ \nonumber
&=(2\pi)^{-d}\int_0^u\int_\Rd 
\big(\mac{A}\xi,\xi\big) \hat p_{v}^2(\xi) \hat g(\xi) {\hat f(-\xi)}d\xi dv\\
&=\nonumber
(2\pi)^{-d}\int_\Rd 
(1-e^{-u|\xi|^2})\big(\mac{A}\xi,\xi\big)|\xi|^{-2} \hat g(\xi) {\hat f(-\xi)}d\xi\ .
& \nonumber
\end{align}
The above use of Fubini's theorem is justified since
\begin{align*}
&\int_\Rd \int_0^u\int_\Rd  |p_{u-v}*\nabla g(x+y)|\  |p_{u-v}*\nabla f(x+y)| \,
p_{v}(dy) dv dx \nonumber\\
&\le u \pl \nabla f \pl_\infty \pl \nabla g\pl_1 
<\infty \ .
\end{align*}
By (\ref{MBeq:bof}) we have $|\Lambda(g,f)|\leq (p^*-1)\pl g \pl_p
\pl f \pl_q$. If $f$ is fixed, then by the Riesz representation theorem there is a function $h$ such that $\pl h\pl_p\leq (p^*-1)\pl g\pl_p$ for all $p\in (1,\infty)$. In particular, $h\in \Lt$, and
$$
\Lambda(g,f)= \int_\Rd h(x){f}(x)dx\\
= (2\pi)^{-d} \int_\Rd \hat h(\xi){\hat f(-\xi)}d\xi\,,
$$
hence $h=\M_u g$, where $\M_u$ is the Fourier multiplier with the symbol $(1-e^{-u|\xi|^2})\big(\mac{A}\xi,\xi\big)/|\xi|^{2}$. 
Let $\M$ be the Fourier multiplier with the symbol $M(\xi)=\big(\mac{A}\xi,\xi\big)/|\xi|^{2}$. We let $u\to \infty$.  By Plancherel's theorem and bounded convergence of the symbols, $\M_u g\to \M g$ in $\Lt $.
There is a sequence $u_n\to \infty$ such that $\M_{u_n}g\to \M g$ almost everywhere. 
Fatou's lemma yields $\pl \M g\pl_p\leq (p^*-1)\pl g\pl_p$, and  
$\M$ extends uniquely to the
whole of $\Lp$ without increasing
the norm. We conclude that the Fourier multiplier with the symbol $\big(\mac{A}\xi,\xi\big)/|\xi|^{2}$ has the norm at most $p^*-1$ on $\Lp$ for $1<p<\infty$, provided 
(\ref{sub1}) holds.

If $\mac{A}\neq 0$ is a general square real or complex $d\times d$ matrix, then $\mac{A}/\pl \mac{A}\pl$ satisfies (\ref{sub1}), hence 
 the Fourier multiplier with the symbol $(\mac{A}\xi,\xi)/|\xi|^2$ has the norm bounded by $\pl \mac{A}\pl (p^*-1)$ on $\Lp$. Here $\pl \mac{A} \pl$ is the (spectral) operator norm of $\mac{A}$, induced by the Euclidean norm on $\C^d$. On occasions, 
 $\nabla p_{u-v}*g(x)$ will have a restricted range of values, and then the inequality in (\ref{sub1}) needs only to hold in this range. In particular, the Beurling-Ahlfors transform given by (\ref{eq:pc}) and (\ref{eq:BAm}) has the norm at most
$2(p^*-1)$ when acting on complex-valued functions, and at most $\sqrt{2}(p^*-1)$ when restricted to real-valued functions, see also Section~\ref{s:d}.

The above calculations of the symbol reflect the identity
$(\mac{A}\xi,\xi)/|\xi|^2=(\mac{A}\xi,\xi)\int_0^\infty \exp (-2t|\xi|^2/2)dt$.
A {semigroup} interpretation of similar calculations is proposed in \cite[(36) and (37)]{MR2345912}. 
As already mentioned in the Introduction, the symbols (\ref{eq:pc}) and their $\Lp$ estimates are not new. We refer the reader to \cite{2010RB} for a detailed discussion of  further symbols that can be obtained by transformations of more general It\^o integrals, and for their applications.

\section{L\'evy-It\^o calculus and Fourier multipliers}\label{s:p}
\begin{proof}[Proof of Theorem~\ref{th:bfm}]

We will first consider $\mu = 0$ in (\ref{eq:fs}), i.e. we will prove the theorem for symbols of the form
\begin{equation}\label{eq:fs_bez_mu}
\frac{
      \int \left[1 - \cos (\xi  ,  z) \right]\phi\left(z\right)V\left(dz\right)
     }  
     {
      \int \left[1 - \cos (\xi  ,  z) \right]V\left(dz\right)
     }.
\end{equation}
For $A\subset \Rd$ we let $\breve{V}(A)=[V(A)+V(-A)]/2$
(the symmetrization of $V$), $\tilde{V}(A)=[V(A)-V(-A)]/2$ (the antisymmetric part of $V$). We also define $\breve{\phi}(z)=[\phi(z)+\phi(-z)]/2$,
$\tilde{\phi}(z)=[\phi(z)-\phi(-z)]/2$ for $z\in \Rd$.
The function $z\mapsto\cos(\xi, z)$ is symmetric, hence  
$\int_\Rd [1-\cos (\xi ,  z)] V(dz)=\int_\Rd [1-\cos( \xi , 
z)] \breve{V}(dz)$. We note that 
$$
\phi V=(\breve{\phi}+\tilde{\phi})(\breve{V}+\tilde{V})=
\big(\breve{\phi}\breve{V}+\tilde{\phi}\tilde{V}\big)
+\big(\breve{\phi}\tilde{V}+\tilde{\phi}\breve{V}\big)
$$
as measures, and so for every $\xi \in \Rd$ we have
  \begin{equation}
    \label{eq:fs3}
\frac{\int_\Rd[1-\cos(\xi ,  z)]\phi(z)V(dz)}
{\int_\Rd [1-\cos (\xi ,  z)] V(dz)}
=\frac{\int_\Rd[1-\cos(\xi ,  z)]
\big(\breve{\phi} \breve{V}+\tilde{\phi} \tilde{V}\big)(dz)}
{\int_\Rd [1-\cos (\xi ,  z)] \breve{V}(dz)}\,.
  \end{equation}
Since $\breve{V}+\tilde{V}=V\geq 0$, we have that
$\tilde{V}=k\breve{V}$, with an antisymmetric real function $k$ such that $|k|\leq 1$.
Thus, in the numerator of (\ref{eq:fs3}) we integrate against
$\phi^*\breve{V}$, where
$\phi^*=\breve{\phi}+k\tilde{\phi}=
\frac{1+k}{2}(\breve{\phi}+\tilde{\phi})
+\frac{1-k}{2}(\breve{\phi}-\tilde{\phi})$, a convex combination.
If $|\phi|\leq 1$ on $\Rd$ then $|\breve{\phi}\pm \tilde\phi|\leq 1$
on $\Rd$. By convexity we see that $|\phi^*|\leq 1$.
Application of \cite[Theorem~1]{MR2345912} to $\breve{V}$ and $\phi^*$ gives the $\Lp$ estimate (\ref{eq:dM})  for the Fourier multiplier with the symbol (\ref{eq:fs_bez_mu}).

We will now prove the general result.
Consider $M$ given by (\ref{eq:fs}) and let $\varepsilon>0$.
In polar coordinates $(r,\theta) \in (0,\infty)\times\sphere$
we define the L\'evy measure
\begin{equation} \nonumber
  \nu_{\varepsilon}(drd\theta) = \varepsilon^{-2}\delta_{\varepsilon}(dr)\mu(d\theta) \,.
\end{equation}
Here $\delta_\varepsilon$ is the probability measure concentrated on $\{\varepsilon\}$.
We consider the multiplier $\M_\varepsilon$ on $\Lt $ with the symbol $M_\varepsilon$
defined by (\ref{eq:fs_bez_mu}),
where the L\'evy measure is replaced by $\ind_{\{ \abs{z} > \varepsilon \}}V + \nu_\varepsilon$
 and the jump modulator is replaced by 
$\ind_{\{ \abs{z} > \varepsilon \}}\phi(z) + \ind_{\{ \abs{z} = \varepsilon \}}\varphi(z/|z|)$.
We let $\varepsilon\to 0$ and note that
\begin{eqnarray}
\int_\Rd [1-\cos(\xi, z)]\varphi(z/|z|) \nu_\varepsilon\left(dz\right)
&=& \int_{\sphere} (\xi  ,  \theta)^2 \varphi(\theta)
 \frac{[1-\cos(\xi,\varepsilon\theta)]}{(\xi  ,  \varepsilon\theta)^2}
 \mu(d\theta) \nonumber \\
 &\to&
 \frac{1}{2}\int_{\sphere} (\xi  ,  \theta)^2 \varphi(\theta) \label{eq:lcs}
 \mu(d\theta)
 \,,
\end{eqnarray}
therefore $M_\varepsilon\to M$, where $M$ is given by (\ref{eq:fs}).
Let $1<p<\infty$ and $g\in \Lt \cap \Lp$.
By Plancherel's theorem and bounded pointwise convergence of the symbols, $\M_\varepsilon g\to \M g$ in $\Lt$ as $\varepsilon \to 0$.
There is a sequence $\varepsilon_n\to 0$, such that $\M_{\varepsilon_n}g\to \M g$ almost everywhere. By Fatou's lemma and the conclusion of the first
part of the proof applied to $\M_{\varepsilon_n}$ we have that $\pl \M g\pl_p\leq (p^*-1)\pl g\pl_p$. 
\end{proof}

In the remainder of this section we 
will show how the symbol in (\ref{eq:fs_bez_mu}) results
from transforming martingales related to non-symmetric L\'evy processes.
Our main purpose is to elucidate as clearly as possible at which point the drift and asymmetry of the L\'evy measure disappear from the picture, so that only symmetric symbols  (\ref{eq:fs_bez_mu}) obtain. The phenomenon was quite a surprise to the authors and may be important
in extending the methods of this paper. 
We will closely follow the development of \cite{MR2345912}, but the presentation is  simpler than that in \cite{MR2345912}, and essentially self-contained.
The reader may also consult
\cite{MR2020294} or \cite{MR1780932} 
for general information about the stochastic calculus of jump processes.

For a measure $\mu$, set $A$, function $f$, and point $a$,  we define 
$\check{\mu}(A)=\mu(-A)$, $\mu(f)=\int f(x)\mu(dx)$, $(f\mu)(A)=\int_A f(x)\mu(dx)$, 
$f^a(x)=f(x+a)$, and $(\mu)^a(f)=\int f(x+a)\mu(dx)=\mu(f^a)$.

Let $\nu\geq 0$ be an arbitrary {\it finite} nonzero measure on $\Rd$ not
charging the origin. Let $|\nu|=\nu(\Rd)$ and $\wt{\nu}=\nu/|\nu|$. 
Let $\pr$ and $\ex$ be the probability and expectation for a family of
independent random variables $T_i$ and $Z_i$, $i=1,2,\ldots$, where each $T_i$ is
exponentially distributed with $\ex T_i=1/|\nu|$, and each $Z_i$ has
$\wt{\nu}$ as its distribution.
We let $S_i=T_1+\ldots+T_i$, for $i=1,2,\ldots$.
For $0\leq t<\infty$ we let $X_{t}=\sum_{ S_i\leq t} Z_i$,
$X_{t-}=\sum_{ S_i< t} Z_i$ and $\Delta X_{t}=X_{t}-X_{t-}$.
We note that $\N(B)=\#\{i:\;(S_i,Z_i)\in B\}$ is a Poisson random measure on
$(0,\infty)\times \Rd$ with the intensity measure $dv\,\nu(dx)$, and
$X_{t}=\int_{ v\leq t} x \N(dvdx)$ is the L\'evy-It\^o
decomposition of $X$; see  \cite{MR1739520}.
Alternatively, we may consider $\N$ as the initial datum, and then $(S_i,Z_i)$ may be defined as the atoms of $\N$.
The number of {\it signals} $S_i$ such that $S_i\leq t$ 
equals $N(t)=\N((0,t]\times \Rd)$, a random variable with Poisson distribution of parameter $|\nu|t$.
We will consider  the generic compound Poisson process with the drift,
\begin{equation}
  \label{eq:dcPp}
X_{t}^b=X_{t}+tb\,.
\end{equation}
Here $b\in \Rd$. 
It is well-known that every L\'evy process
on $\Rd$ can be obtained as a limit of such processes. Again, we refer the reader to  \cite{MR1739520}.
As we will see, the study of $\{X^b_{t}\}$ easily
reduces to that of $\{X_{t}\}$, or to the case of $b=0$. For instance, our notation gives
\begin{equation}
  \label{eq:dcPpf}
\ex f(X_{t}^b)=\ex f^{tb}(X_{t})\,.
\end{equation}
\begin{lemma} \label{lem:ls}
If $F:\; \R\times \Rd\times \Rd\to\C$ is bounded and $0\leq t<\infty$, then
  \begin{equation}
    \label{eq:els}
    \ex \sum_{ S_i\leq t} F(S_i,X^b_{S_{i}-}, X^b_{S_i})
= \ex \int_0^t \int_\Rd F(v,X^b_{v-},
X^b_{v{-}}+z)\nu(dz)dv\,.
  \end{equation}
\end{lemma}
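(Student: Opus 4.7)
The identity (\ref{eq:els}) is the classical compensation (Campbell) formula for the Poisson random measure $\N$ of intensity $dv\,\nu(dz)$, applied to the integrand $(v,z,\omega)\mapsto F(v,X^b_{v-},X^b_{v-}+z)$. I would deliver it in two moves: first average out the mark $Z_i$ to reduce the claim to an identity involving only the signal counting measure $M=\sum_i\delta_{S_i}$, and then apply the Poisson-process compensation $\ex M(dv)=|\nu|\,dv$ to a predictable scalar integrand.

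For the first move, fix $i\ge 1$ and let $\mathcal{G}_i=\sigma(T_1,T_2,\ldots,\,Z_1,\ldots,Z_{i-1})$. Both $S_i=T_1+\cdots+T_i$ and $X^b_{S_i-}=bS_i+Z_1+\cdots+Z_{i-1}$ are $\mathcal{G}_i$-measurable, while $Z_i$ is independent of $\mathcal{G}_i$ with law $\wt{\nu}$. Setting
\begin{equation*}
H(v,\omega)=\int_\Rd F\bigl(v,X^b_{v-}(\omega),X^b_{v-}(\omega)+z\bigr)\,\wt{\nu}(dz),
\end{equation*}
I would obtain $\ex\bigl[\ind_{\{S_i\le t\}}F(S_i,X^b_{S_i-},X^b_{S_i-}+Z_i)\mid\mathcal{G}_i\bigr]=\ind_{\{S_i\le t\}}H(S_i)$. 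Since $\|H\|_\infty\le\|F\|_\infty$ and $\sum_i\ind_{\{S_i\le t\}}=N(t)$ has expectation $|\nu|t<\infty$, summing over $i$ and applying Fubini rewrites the left-hand side of (\ref{eq:els}) as $\ex\int_{(0,t]}H(v)\,M(dv)$.

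For the second move, $H$ is bounded and, being built from the left-continuous process $v\mapsto X^b_{v-}$, is predictable with respect to the natural filtration $\F_v$ of $X^b$. The signal counter $N(v)=M((0,v])$ is a Poisson process of rate $|\nu|$, so $N(v)-|\nu|v$ is an $\F_v$-martingale. For step predictable integrands $H(v)=Y\ind_{(a,b]}(v)$ with $Y$ bounded and $\F_a$-measurable, the identity $\ex\int_{(0,t]}H\,dM=|\nu|\,\ex\int_0^t H(v)\,dv$ is immediate from $\ex[N(b\wedge t)-N(a\wedge t)\mid\F_a]=|\nu|(b\wedge t-a\wedge t)$; a standard monotone class argument, using boundedness of $H$ and integrability of $N(t)$, extends it to the bounded predictable $H$ at hand. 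Substituting $|\nu|\wt{\nu}=\nu$ then yields the right-hand side of (\ref{eq:els}). The only non-routine point is this last compensation identity; everything else is conditioning-plus-independence and Fubini, unproblematic because $F$ is bounded and $N(t)$ is integrable.
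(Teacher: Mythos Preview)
Your argument is correct: conditioning out the mark $Z_i$ against $\mathcal{G}_i$ and then applying the compensation formula for the Poisson counting process $N$ is exactly the right mechanism, and the integrability check via $\ex N(t)=|\nu|t$ is what makes Fubini and the monotone-class extension legitimate.

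The paper's own proof is much terser and organized slightly differently. Rather than working with $X^b$ throughout, it first observes that the substitution $F^*(v,x,y)=F(v,x+vb,y+vb)$ reduces the statement to the driftless case $b=0$, and then simply invokes the proof of the corresponding lemma in \cite{MR2345912}, noting that the symmetry of $\nu$ assumed there was never actually used. So the paper gives a reduction-plus-citation, whereas you supply a self-contained proof that handles the drift transparently (since $X^b_{v-}$ already carries $vb$). The underlying probabilistic content---integrate out the independent mark, then compensate the Poisson jump times---is the same in both, but your write-up makes the two moves explicit while the paper defers the details to the earlier reference.
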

\begin{proof}
By considering $F^*(v,x,y)=F(v,x+vb,y+vb)$ we may assume that
$b=0$ in (\ref{eq:els}).
In this case the proof of \cite[Lemma~1]{MR2345912}
applies (the symmetry of $\nu$ was not used in that proof).
For clarity we note that $N(t)$ is exponentially integrable, and so is the sum in  (\ref{eq:els}).
\end{proof}

In particular, for finite $t\geq 0$ and bounded $F$ we have
\begin{eqnarray}
&& \ex \sum_{ S_i\leq t}\left[ F(S_i,X^b_{S_{i}-},
  X^b_{S_i})-F(S_i,X^b_{S_{i}-}, X^b_{S_{i}-})\right]\nonumber \\
&& = \ex \int_0^t \int_\Rd \left[F(v,X^b_{v-}, X^b_{v-}+z)-F(v,X^b_{v-},X^b_{v-})\right]\nu(dz)dv\,.
\quad  \label{eq:d-c}
\end{eqnarray}
In what follows we will consider the filtration
\begin{displaymath}
  \F_t=\sigma\{X_{t}\,;\; 0\leq s\leq t\}=\sigma\{X^b_{t}\,;\; s\leq t\}
\,,\quad t\geq 0\ .
\end{displaymath}
For $t\geq 0$ we define 
\begin{equation}
  \label{eq:dp}
p_{t}=e^{-t|\nu|}\sum_{n=0}^\infty
\frac{t^n}{n!}\nu^{*n}=e^{*t(\nu-|\nu|\delta_0)}
=\sum_{n=0}^\infty \frac{t^n}{n!}(\nu-|\nu|\delta_0)^{*n}
\,.
\end{equation}
The series converges in the norm of absolute variation of measures.
Clearly,
\begin{equation}
  \label{eq:dpt}
  \frac{\partial}{\partial t}p_t=(\nu-|\nu|\delta_0)*p_t\,,
\end{equation}
and $p_{s}*p_{t}=p_{s+t}$ for $s,t\geq 0$. 
By (\ref{eq:dp}), $p_{t}$ is the distribution of $X_{t}$ (as well as of $X_{t-}$), and the sides of (\ref{eq:els}) equal 
\begin{equation}\label{eq:essem}
\int_0^t\int_\Rd  \int_\Rd F(v,y+vb,y+vb+z)\nu(dz)p_{v}(dy)dv\,.
\end{equation}
Let 
\begin{equation}
  \label{eq:lke}
  \Psi(\xi)=\int_\Rd \left[e^{i(\xi,z)}-1\right]\nu(dz)\,,\quad \xi \in \Rd\,.
\end{equation}
We directly verify that $\Psi$ is bounded and continuous on
$\Rd$, $\Psi(-\xi)=\overline{\Psi(\xi)}$,
$\Re\Psi(\xi) = \int_\Rd [\cos (\xi, z) - 1] \nu(dz)$ (compare the denominator in (\ref{eq:fs_bez_mu})), and
\begin{equation}
  \label{eq:lkf}
\widehat{p_t}(\xi)=\int_\Rd e^{i(\xi,x)}p_t(dx)
=e^{t\Psi(\xi)}\,, \quad \xi \in \Rd\,.
\end{equation}
$\Psi$ is the L\'evy-Khinchine exponent and (\ref{eq:lkf}) is the L\'evy-Khinchin formula for $X$.
We also consider the convolution semigroup with the drift vector $b$,
$$p^b_t=(p_t)^{tb}\,, \quad t\geq 0\,,$$ 
that is
$p^{b}_t(f)=p_t(f^{tb})$. We have
\begin{equation}
  \label{eq:lkfb}
\widehat{p^{b}_t}(\xi)=\int_\Rd e^{i(\xi,x+tb)}p_t(dx)
=e^{it(\xi,b)+t\Psi(\xi)}\,, \quad \xi \in \Rd\,.
\end{equation} 
In what follows we let
$f,g \in \Cinfc$, $x\in \Rd$ and $0\leq t\leq u<\infty$. We define
\begin{equation}\label{eq:nacc}
  P^b_{t} g(x)=\ex g(x+X^b_{t})=\int_\Rd g(x+y)p^b_{t}(dy)\,.
\end{equation}
This is the convolution with  the reflection of $p^b_{t}$, and we have
\begin{equation}
  \label{eq:nac}
  \widehat{P^b_{t} g}(\xi)=\hat{g}(\xi)\widehat{p^{b}_{t}}(-\xi)
=\hat{g}(\xi)e^{-it(\xi,b)+t\Psi(-\xi)}\,, \quad \xi \in \Rd\,.
\end{equation}
We denote $P_{t}=P^0_{t}$. By (\ref{eq:nacc}) we have $P^b_{t} g=P_{t}(g^{tb})$.

We define the following {\it parabolic martingale}
\begin{equation}
  \label{eq:dpm}
  F^b_t=F^b_t(x;u,f)=P^b_{u-t} f(x+X^b_{t})=P_{u-t} f^{ub}(x+X_{t})=F_t(x;u,f^{ub})  \,,
\end{equation}
where we write $F_t=F^0_t$.
By \cite[Lemma~2]{MR2345912} and (\ref{eq:dpm}),
$t\mapsto F^b_t$ is indeed a (bounded) $\{\F_t\}$-martingale.
In fact, this is very simple because $t\mapsto X_{t}$ is piecewise constant, and so
\begin{eqnarray*}
F_t(x;u,f^{ub})-F_0(x;u,f^{ub})&=&\sum_{S_i\leq t}
[P_{u-v}f^{ub}(x+X_{S_i})-P_{u-v}f^{ub}(x+X_{S_i-})]
\\
&+&\int_0^t\left(\frac{\partial}{\partial v}P_{u-v}\right)f^{ub}(x+X_{v-})dv\,.
\end{eqnarray*}
The equality is a special case of  It\^o formula for the space-time process $t\mapsto (u-t,X_t)$, see, e.g., \cite[Theorem II.31]{MR2020294}, \cite[p. 140]{elliott}.
By (\ref{eq:dpt}) and Lemma~\ref{lem:ls}, the above expression has zero expectation. Furthermore, for $0\leq s\leq t$ we consider
$$
\sum_{s<S_i\leq t}
[P_{u-v}f(x+X_{S_i})-P_{u-v}f(x+X_{S_i-})]
+\int_s^t\left(\frac{\partial}{\partial v}P_{u-v}\right)f(x+X_{v-})dv
\,.
$$
For $v\geq s$ we have $X_v=X_s+(X_{v}-X_s)$, where the two terms are independent, and the process $t\mapsto Y_t=X_{t+s}-X_s$ is compound Poisson.
Integrating against the distribution of $Y$, and using Lemma~\ref{lem:ls} and (\ref{eq:dpt}) we see that $F_t$ is a martingale.

Let $\phi:\Rd\to \C$ and $\|\phi\|_\infty\leq 1$. We define 
$G^b_t=G^b_t(x;u,g,\phi)$ as
\begin{eqnarray}  
\!\!\!\!\!
\!\!\!\!\!
&&
\sum_{ S_i\leq t}
\left[P^b_{u-S_i}
  g(x+X^b_{S_{i}})-P^b_{u-S_i}g(x+X^b_{S_{i}-})\right]\phi(X^b_{S_i}-X^b_{S_i-})\nonumber
\\
\!\!\!\!\!
\!\!\!\!\!
&& - \int_0^t \int_\Rd
\left[P^b_{u-v}g(x+X^b_{v-}+z)-P^b_{u-v}g(x+X^b_{v-})\right]\phi(z)\nu(dz)dv\,.
\label{df:martF}
\end{eqnarray}
We let $G_t=G^0_t$, and note that 
$G^b_t(x;u,g,\phi)=G_t(x;u,g^{ub},\phi)$.
It now follows from \cite[Lemma~3 and Lemma~4]{MR2345912}, or a similar reasoning as above, that
$G^b_t$ is an $\{\F_t\}$-martingale, and $\ex |G_t|^p<\infty$ for every $p>0$. 
We also have 
\begin{eqnarray} \label{l:imf}
F^b_t(x;u,f)&=&F_t(x;u,f^{ub})=G_t(x;u,f^{ub},1)+P_{u}(f^{ub})(x)\,.
\end{eqnarray}
Let $n\to \infty$.
Since $G^b_t$ is square integrable, by orthogonality of increments we have
\begin{eqnarray*}
\ex \abs{G^b_t}^2&=&\ex \sum_{k=1}^n \abs{G^b_{kt/n}-G^b_{(k-1)t/n}}^2\nonumber \\
&\to& 
\ex \sum_{S_i\leq t} 
\abs{P^b_{u-S_i}g(x+X^b_{S_i})-P^b_{u-S_i}g(x+X^b_{S_i-})}^2\abs{\phi(\Delta
X^b_{S_i})}^2\,. \label{eq:qv}
\end{eqnarray*}
The convergence follows from the fact that the integral in
(\ref{df:martF}) is Lipschitz continuous in $t$.
Hence the quadratic variation (\cite{MR2020294}, \cite{MR745449}) of $G^b$ is 
\begin{equation}
[G^b,G^b]_t = \sum_{S_i\leq t} 
\abs{P^b_{u-S_i}g(x+X^b_{S_i})-P^b_{u-S_i}g(x+X^b_{S_i-})}^2\abs{\phi(\Delta
X_{S_i})}^2
\,. \label{eq:qvp}
\end{equation}
By (\ref{l:imf}), the quadratic variation of $F^b$ is 
\begin{equation}\label{eq:qvG}
[F^b,F^b]_t = |P^b_{u}f(x)|^2+\sum_{S_i\leq t} 
\abs{P^b_{u-S_i}f(x+X^b_{S_i})-P^b_{u-S_i}f(x+X^b_{S_i-})}^2
\,. 
\end{equation}
Thus, $G^b(x;u,g,\phi)$ is
{differentially subordinate} to $F^b(x;u,g)$, compare (\ref{eq:dsu}).  
Let $1<p<\infty$.
We may use the result of Wang \cite[Theorem~1]{MR1334160} for general martingales with jumps, to conclude that  
\begin{equation}\label{eq:wmt}
   \ex |G^b_t(x;u,g,\phi)| ^p\leq (p^*-1)^p   \,\ex |F^b_t(x;u,g)|^p\,.
   \end{equation}
Let $t=u$. We have $F^b_u(x;u,f)=f(x+X^b_{u})$. Using (\ref{eq:wmt}) and (\ref{eq:wrtlm}) we obtain
\begin{equation*} 
\int_\Rd \ex  
|G^b_u(x;u,g,\phi)|^p dx 
\leq
(p^*-1)^p   
\int_\Rd \ex |g(x+X^b_{u})|^p dx 
=(p^*-1)^p   
\pl g \pl_p^p\,.
\end{equation*}
By H\"older's  inequality and (\ref{eq:wrtlm}),
\begin{equation}
\label{eq:bof}
\int_\Rd \ex |G^b_u(x;u,g,\phi) f(x+X^b_{u})| dx \leq 
(p^*-1)\pl g \pl_p
\pl f \pl_q\,.
\end{equation}
By (\ref{eq:qv}), (\ref{eq:qvG}) and Lemma~\ref{lem:ls}, 
\begin{eqnarray}
&&\ex G^b_u(x;u,g,\phi) {F^b_u(x;u,f)}   = \ex G^b_u[F^b_u-P^b_{u}f(x)]
\nonumber
\\
&=&\ex \sum_{S_i\leq u} 
\left[P^b_{u-S_i}g(x+X^b_{S_i})-P^b_{u-S_i}g(x+X^b_{S_i-})\right]
\nonumber\\
&&\;\;\;\;\;\;\;\;\;\;\;\;\;[P^b_{u-S_i}f(x+X^b_{S_i})-P^b_{u-S_i}f(x+X^b_{S_i-})]
\phi(\Delta X_{S_i})
\nonumber\\
&=&
\ex \int_0^u \int_\Rd 
\left[P^b_{u-v}g(x+X^b_{v-}+z)-P^b_{u-v}g(x+X^b_{v-})\right]
\nonumber\\
&&\;\;\;\;\;\;\;\;\;\;\;\;\;\;\;[P^b_{u-v}f(x+X^b_{v-}+z)-P^b_{u-v}f(x+X^b_{v-})]
\phi(z)\nu(dz)dv
\nonumber\\
&=& \int_0^u \int_\Rd  \int_\Rd
\left[P^b_{u-v}g(x+y+z)-P^b_{u-v}g(x+y)\right]\label{eq:3245}\\
&&\;\;\;\;\;\;\;\;\;\;\;\;\;\;\;\;[P^b_{u-v}f(x+y+z)-P^b_{u-v}f(x+y)]\phi(z)\nu(dz)p^b_{v}(dy)dv\,.
\nonumber
\end{eqnarray}
To justify applications of Fubini's theorem in what follows, we note that
(\ref{eq:wrtlm}) and the finiteness of $\nu$ imply
\begin{align}
&\int_\Rd \int_0^u \int_\Rd  \int_\Rd
\left|P^b_{u-v}g(x+y+z)-P^b_{u-v}g(x+y)\right|
\nonumber\\
&\;\;\;\;\;\;\;\;\;\;\;\;\;\;\;\;\left|P^b_{u-v}f(x+y+z)-P^b_{u-v}f(x+y)\right|\phi(z)\nu(dz)p^b_{v}(dy)dvdx
\nonumber\\
&\leq 4 \pl g\pl_\infty \|\phi\|_\infty |\nu| \int_0^u 
\pl P^b_{u-v}f\pl_1dv\le   4u\pl g\pl_\infty  \pl f\pl_1|\nu|<\infty
\,.\label{eq:3246}
\end{align}
We consider
$$
\Lambda(g,f)=\int_\Rd \ex G_u(x;u;g,\phi) F_u(x;u,f) dx \ .
$$
Using (\ref{eq:wrtlm}), (\ref{eq:3245}), Plancherel\rq{}s theorem and (\ref{eq:nac}), we see that $\Lambda(g,f)$ equals
\begin{align}
&
\int\limits_0^u
\int\limits_\Rd 
\int\limits_\Rd \left[P^b_{u-v}g(x+z)-P^b_{u-v}g(x)\right]
[P^b_{u-v}f(x+z)-P^b_{u-v}f(x)]\phi(z)\nu(dz)dxdv\,
\nonumber \\
&= 
(2\pi)^{-d}\int\limits_0^u 
\int\limits_\Rd \int\limits_\Rd
|e^{-i(\xi,z)}-1|^2\hat{g}(\xi){\hat{f}(-\xi)}\label{eq:znikabe}e^{2(u-v)\Re \Psi(\xi)}\phi(z)\nu(dz)d\xi dv 
\\
&=
(2\pi)^{-d}\int\limits_\Rd\hat{g}(\xi)\hat{f}(-\xi) M_u(\xi) d\xi\ \,,
\nonumber
\end{align}
where
\begin{equation}
  \label{eq:fss}
M_u(\xi)=\frac{\int_\Rd[\cos(\xi,z)-1]\phi(z)\nu(dz)}
{\int_\Rd[\cos(\xi,z)-1]\nu(dz)}\left[1-e^{2u\Re\Psi(\xi)}\right]\,.
\end{equation}
By (\ref{eq:bof}) we have that $|\Lambda(g,f)|\leq (p^*-1)\pl g \pl_p
\pl f \pl_q$. By the Riesz representation theorem there is a function $h\in \Lt\cap \Lp$ such that $\pl h\pl_p\leq (p^*-1)\pl g\pl_p$, and
$$
\Lambda(g,f)= \int_\Rd h(x){f}(x)dx\\
= (2\pi)^{-d} \int_\Rd \hat h(\xi){\hat f(-\xi)}d\xi\,.
$$
We conclude that the mapping $g\mapsto h$ is a Fourier multiplier with the symbol $M_u$, and its norm norm is at most $p^*-1$ on $\Lp$. If $V$ is an arbitrary L\'evy measure, then we consider $\varepsilon>0$ and define $\nu$ as the restriction of $V$ to
$\{z:\,|z|>\varepsilon\}$. We let $u\to \infty$ and $\varepsilon\to 0$, and use Fatou\rq{}s lemma as in Section~\ref{BMs:p} and after (\ref{eq:lcs}), to obtain the symbol (\ref{eq:fs_bez_mu}), and the bound $p^*-1$ for general L\'evy measures.

We note that the drift vector $b$ and the asymmetry of the L\'evy measure disappear from our formulas in (\ref{eq:znikabe}).

\section{Further discussion and examples}\label{s:d}
We will comment on the relation between (\ref{BMeq:fs}) and (\ref{eq:pc}).
We first remark that the matrices $\mac{A}$, $\mac{B}$ given by (\ref{eq:mx})  
are symmetric.
  We have 
  \begin{equation}\label{eq:mAB}
  \mac{A} \xi =\int_\sphere  \theta\, (\xi,{\theta})\varphi(\theta) \,\mu(\theta)\quad \mbox{ and }\quad \mac{B} \xi =\int_\sphere  \theta\, (\xi,{\theta}) \,\mu(\theta)
\ ,
\quad \xi \in \Rd\ .
\end{equation}
A natural question arises: How to find $\mu$ and $\varphi$ for given symmetric matrices $\mac{A}$ and $\mac{B}$?
We will focus on $\mac{B}=\mac{I}$, the identity matrix.
\begin{lemma}\label{lem:scf}
If $\mac{A}$ is a complex symmetric $d\times d$ matrix, and $|\mac{A}\xi|\leq |\xi|$ for $\xi \in \R^d$,
then a finite measure $\mu\ge 0$ and a function $\varphi$ on  $\sphere$ exist such that $\pl \varphi\pl_\infty\leq
2$, 
$$ \int_{\sphere} \left( \xi ,  \theta \right)^2 \mu\left(d\theta \right)=(\xi,\xi)\quad \mbox{ and } \quad(\mac{A}\xi,\xi)=\int_{\sphere} \left( \xi ,  \theta \right)^2 \varphi\left( \theta \right) \mu\left(d\theta \right)\,, \quad \xi\in \Rd
 \ .
 $$
 If $\Re \A$ and $\Im \A$ commute, then we may select $\|\varphi\|_\infty\leq 1$.
\end{lemma}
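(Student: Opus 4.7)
The plan is to reduce the complex symmetric case to two real symmetric cases via the spectral theorem. Write $\A = S + iT$ where $S = (\A + \overline{\A})/2$ and $T = (\A - \overline{\A})/(2i)$; since $\A$ is complex symmetric, both $S$ and $T$ are real symmetric $d\times d$ matrices. For real $\xi$, both $S\xi$ and $T\xi$ are real vectors, so $|\A\xi|^2 = |S\xi|^2 + |T\xi|^2$, and the hypothesis $|\A\xi| \leq |\xi|$ forces $\|S\|,\|T\| \leq 1$ as operators on $\R^d$; in particular every eigenvalue of $S$ and of $T$ lies in $[-1,1]$.

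By the spectral theorem, choose orthonormal bases $\{e_k\}$ of $\R^d$ diagonalizing $S$ (with $Se_k = s_ke_k$) and $\{f_k\}$ diagonalizing $T$ (with $Tf_k = t_kf_k$), where $|s_k|,|t_k|\leq 1$. Define atomic measures on $\sphere$ by $\mu_S = \sum_{k=1}^d \delta_{e_k}$ and $\mu_T = \sum_{k=1}^d \delta_{f_k}$, and functions $\psi_S,\psi_T$ by $\psi_S(e_k) = s_k$ and $\psi_T(f_k) = t_k$ (extended by $0$ elsewhere). Parseval's identity gives $\int_\sphere (\xi,\theta)^2\,\mu_S(d\theta) = \sum_k (\xi,e_k)^2 = |\xi|^2$ for $\xi\in\R^d$, and similarly for $\mu_T$; also $\int (\xi,\theta)^2 \psi_S(\theta)\,\mu_S(d\theta) = (S\xi,\xi)$ and the analogous identity holds for $T$.

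Now take $\mu = \tfrac{1}{2}(\mu_S + \mu_T)$, so $\int (\xi,\theta)^2\,\mu(d\theta) = |\xi|^2$. Since $\mu \geq \tfrac{1}{2}\mu_S$ and $\mu \geq \tfrac{1}{2}\mu_T$, both $\mu_S$ and $\mu_T$ are absolutely continuous with respect to $\mu$; let $\alpha = d\mu_S/d\mu$ and $\beta = d\mu_T/d\mu$, so that $0 \leq \alpha,\beta \leq 2$ and $\alpha + \beta = 2$ $\mu$-a.e. Set $\varphi = \alpha\psi_S + i\beta\psi_T$; then $\varphi\,\mu = \psi_S\mu_S + i\psi_T\mu_T$, hence $\int (\xi,\theta)^2 \varphi(\theta)\,\mu(d\theta) = (S\xi,\xi) + i(T\xi,\xi) = (\A\xi,\xi)$, while $|\varphi| \leq \alpha|\psi_S| + \beta|\psi_T| \leq \alpha + \beta = 2$.

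For the commuting case, $S$ and $T$ share a common orthonormal eigenbasis $\{e_k\}$ with $Se_k = s_ke_k$ and $Te_k = t_ke_k$, so $\A e_k = (s_k+it_k)e_k$; the hypothesis applied to $\xi = e_k$ yields $|s_k + it_k| \leq 1$. Taking $\mu = \sum_k \delta_{e_k}$ and $\varphi(e_k) = s_k + it_k$ gives all three identities with $\|\varphi\|_\infty \leq 1$. The only mildly delicate step in the general case is reconciling the possibly overlapping supports of $\mu_S$ and $\mu_T$, which the Radon--Nikodym construction handles automatically; otherwise the argument is a direct spectral calculation, and the factor $2$ loss in $\|\varphi\|_\infty$ is exactly the price of failing to diagonalize $S$ and $T$ simultaneously.
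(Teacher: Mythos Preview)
Your proof is correct and follows essentially the same route as the paper: diagonalize the real and imaginary parts separately (the paper phrases this as applying the normal case to $\Re\A$ and $i\Im\A$), then combine the two atomic measures, incurring the factor $2$ in $\|\varphi\|_\infty$; for commuting $\Re\A$, $\Im\A$ a common eigenbasis avoids the loss. Your Radon--Nikodym treatment of possibly overlapping supports is more explicit than the paper's, which simply says to ``add the respective measures $\mu$, and $\varphi\mu$'', but the underlying idea is identical.
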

\begin{proof}
We emphasize that $\A$ is symmetric but not necessarily Hermitian. Assume first that $\A$  is normal, that is $\Re \A$ and $\Im \A$ commute. Then they have common eigenvectors $a_k\in \Rd$, and so $\A a_k=\lambda_k a_k$, where $\lambda_k\in \C$, and $|\lambda_k|\leq 1$ for $k=1,\ldots,d$.
For $\xi\in \Rd$,
$$\sum_{k=1}^d (\xi,a_k)^2=|\xi|^2\,,$$ 
and
$$(\A \xi, \xi)=\left(\sum_{k=1}^d (\xi,a_k)\A a_k,\sum_{k=1}^d (\xi,a_k)a_k\right)
=\sum_{k=1}^d \lambda_k (\xi,a_k)^2\,.$$
We may now choose $\mu=\sum_{k=1}^d \delta_{a_k}$ and $\varphi(a_k)=\lambda_k$, so that $\|\varphi\|_\infty\leq 1$. Here $\delta_a$ is the Dirac measure at $a$.

If $\Re  \A$ and $i\Im \A$ do not commute then we may consider each of them separately as in the first part of the proof. We may add the respective measures $\mu$, and $\varphi\mu$. We see that the resulting $\varphi$ is bounded by $1$, but we only obtain a representation of $(\mac{A}\xi,\xi)/[2(\xi,\xi)]$. 
This ends the proof.
\end{proof}
For instance, we consider $\mac{A}$ given by (\ref{eq:R2}). Since $\mac{A}$ is real, and
$\mac{A}(\xi_1,\xi_2,\xi_3,\ldots,\xi_n)=(-\xi_2,-\xi_1,0,\dots,0)$, by Lemma~\ref{lem:scf} and Theorem~\ref{th:bfm} we see that the multiplier with the symbol $-2\xi_1\xi_2/|\xi|^2$ is bounded on $\Lp$ for all $p\in (1,\infty)$, and its norm is not greater than $p^*-1$. The operator is the composition $2R_1R_2$, where $R_j$  is a Riesz transform the first order, i.e. the Fourier multiplier with the symbol $i\xi_j/|\xi|$. Here $i=\sqrt{-1}$, $\xi\in \Rd$ and  $j=1,\ldots,n$. 
As noted in the Introduction the norm actually {\it equals} $p^*-1$ (\cite[Corollary 3.2]{MR2551497}).
  
If $|\mac{A}\xi|\leq c|\xi|$ for $\xi \in \R^d$, then Section~\ref{BMs:p} gives the norm bound $c(p^*-1)$ for the multiplier with the symbol $(A\xi,\xi)/|\xi|^2$, whereas Lemma~\ref{lem:scf} in general only gives $2c(p^*-1)$.  This is disconcerting, but in the following important special case the gap dissapears.

We will consider the Beurling-Ahlfors operator. It is the singular integral 
on the complex plane $\C$ (identified with $\Rt$), defined for smooth compactly supported functions $g$ as follows,
\begin{equation}
   \label{eq:ba}
   B g(z)=-\frac{1}{\pi}\, p.v. \int_{\C}
   \frac{g(w)}{(z-w)^2}dm(w)\,, \quad z\in \C \ .
\end{equation} 
Here $m$ is the planar Lebesgue measure. It is well known that $B$ is a Fourier multiplier with the symbol 
\begin{equation}\label{eq:sBA}
M(\xi)=\frac{\overline{\xi}^2}{\abs{\xi}^2}=e^{-2i\arg \xi} \ ,
 \,
\end{equation}
where $\xi=(\xi_1, \xi_2)\in \mathbb{R}^2$ is identified with $\xi_1+i\xi_2\in \C$. For a detailed discussion of $B$, its numerous connections and applications in analysis, partial differential equations and quasiconformal mappings, 
we refer to \cite{2010RB} and the many references given there.  

The above symbol  $M$ is precisely the one given by (\ref{eq:pc}) and (\ref{eq:BAm}). We have $\| A\|=2$, see the Introduction.
Lemma~\ref{lem:scf} and Theorem~\ref{th:bfm} yield
the norm bound $4(p^*-1)$ for $B$ on $\Lp$. However, a detailed inspection shows that $\mu$ uniform on $\{1, i, e^{i\pi/4}, e^{-i\pi/4}\}$, and $\varphi$ such that
$\varphi(1) = 2$, $\varphi(i) = -2$, $\varphi(e^{i\pi/4}) = -2i$, $\varphi(e^{-i\pi/4}) = 2i$, give a more efficient representation (\ref{BMeq:fs}) of (\ref{eq:sBA}), and so $\| B\|\leq 2(p^*-1)$. The bound was first obtained in \cite{NV} by using certain Bellman function constructed from Burkholder's discrete martingale inequalities.  The It\^o calculus approach was presented in \cite{banuelos-mendez} to get the bound, as in our Section~\ref{BMs:p}. The best bound to date for the operator norm of $B$ on $\Lp$ is given in \cite{MR2386238}. We refer the reader to 
\cite{2010RB} for further references, and a thorough discussion of the celebrated conjecture of T. Iwaniec, asserting that $\pl B\pl =p^*-1$.

As it stands, our approach seems to be unable to improve the bound $2(p^*-1)$ for (\ref{eq:sBA}), as indicated by the following fact, which should be compared with (\ref{BMeq:fs}).
\begin{lemma}\label{lem:c2} If $\varphi$ and nonzero $\mu\geq 0$ on $\sphere\subset \Rt$ are such that
\begin{equation} \label{eq:BAnegative}
\int_{\sphere} (\xi,\theta)^{2} \ \varphi(\theta)\mu(d\theta)
=e^{-2i\arg \xi} \int_{\sphere}(\xi,\theta)^{2} \ \mu(d\theta)
 \,, \quad \xi \in \Rt \ ,
\end{equation}
 then $\pl \varphi\pl_\infty  \geq 2$.
\end{lemma}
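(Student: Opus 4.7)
The plan is to reduce (\ref{eq:BAnegative}) to a system of linear relations between Fourier coefficients of $\mu$ and $\varphi\mu$ on the unit circle, and then extract the one coefficient identity that is incompatible with $\|\varphi\|_\infty<2$. First I would identify $\Rt$ with $\C$ and parametrize $\xi=|\xi|e^{i\alpha}$, $\theta=e^{i\beta}$, so that $(\xi,\theta)=|\xi|\cos(\alpha-\beta)$, both sides of (\ref{eq:BAnegative}) scale as $|\xi|^2$, and $e^{-2i\arg\xi}=e^{-2i\alpha}$. The elementary identity $\cos^2(\alpha-\beta)=\tfrac12+\tfrac14 e^{2i(\alpha-\beta)}+\tfrac14 e^{-2i(\alpha-\beta)}$ turns each side into a finite trigonometric polynomial in $\alpha$: the left side has only the frequencies $\{-2,0,2\}$, while the right side (after multiplying the bracket by $e^{-2i\alpha}$) has only the frequencies $\{-4,-2,0\}$.

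Next, introduce the moments
$$M_k=\int_\sphere e^{ik\beta}\,\mu(d\beta),\qquad \tilde M_k=\int_\sphere e^{ik\beta}\varphi(\theta)\,\mu(d\beta).$$
Because (\ref{eq:BAnegative}) is assumed to hold for every $\xi\in\Rt$, hence for every $\alpha\in[0,2\pi)$, I may match the coefficient of each character $e^{ik\alpha}$ on the two sides. The coefficient at $k=-4$ forces $M_2=0$; the coefficient at $k=2$ forces $\tilde M_{-2}=0$; the constant term gives $\tilde M_0=M_{-2}/2=\overline{M_2}/2=0$; and, crucially, the coefficient at $k=-2$ yields
$$\tilde M_2=2M_0=2\mu(\sphere).$$

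Once this identity is in hand, the lemma follows from the trivial triangle inequality: since $|e^{2i\beta}|=1$,
$$2\mu(\sphere)=|\tilde M_2|=\Bigl|\int_\sphere e^{2i\beta}\varphi(\theta)\,\mu(d\beta)\Bigr|\le\int_\sphere|\varphi|\,d\mu\le\|\varphi\|_\infty\,\mu(\sphere),$$
and since $\mu(\sphere)>0$, we conclude $\|\varphi\|_\infty\ge 2$. There is no real obstacle here; the only point to verify is that the coefficient comparison is legitimate, which is automatic because both sides are finite linear combinations of the distinct characters $\{e^{ik\alpha}\}_{k\in\{-4,-2,0,2\}}$ that agree for every $\alpha$. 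In fact the proof shows a bit more than claimed: any $\mu$ admitting such a representation of $e^{-2i\arg\xi}$ must annihilate the second harmonic ($M_2=0$), and the $L^\infty$ bound $\|\varphi\|_\infty\ge 2$ is sharp precisely when $\varphi(\theta)=2e^{-2i\beta}$ on $\supp\mu$, which matches the explicit optimal choice used for the Beurling--Ahlfors symbol in the preceding discussion.
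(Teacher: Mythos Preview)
Your proof is correct and follows essentially the same route as the paper: expand $(\xi,\theta)^2=\cos^2(\alpha-\beta)$ into exponentials, match the Fourier coefficients in $\alpha$, and use the identity at frequency $-2$ (namely $\int_\sphere e^{2i\beta}\varphi\,d\mu=2\mu(\sphere)$) together with the triangle inequality. Your version is slightly more systematic in recording all four coefficient relations (the paper only writes the decisive one), and the additional observations about $M_2=0$ and the equality case go a bit beyond the paper but are correct.
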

\begin{proof}
We can assume that $\varphi$ is bounded. 
We denote $t=\arg \xi$, $s=\arg \theta$, and identify $\varphi(\theta)$ and $\mu(d\theta)$ with $\varphi(s)$ and $\mu(ds)$, correspondingly.
We have 
\[
(\xi,\theta)^2=\cos^2 (t-s) 
=\frac{1}{2}\big(\cos (2(t-s))+1\big)=
\frac{1}{2}+\frac{1}{4}e^{2it}e^{-2is}+\frac{1}{4}e^{-2it}e^{2is} \,, 
\]
and hence the left-hand side of (\ref{eq:BAnegative}) is
\[
\frac{1}{2}\int_\sphere \varphi(s)\mu(ds)+\frac{1}{4}e^{2it}\int_\sphere e^{-2is}\varphi(s)\mu(ds)+\frac{1}{4}e^{-2it}\int_\sphere e^{2is}\varphi(s)\mu(ds).
\]
However, the right-hand side equals
\[
\frac{1}{2}e^{-2it}\int_\sphere \mu(ds)+\frac{1}{4}\int_\sphere e^{-2is}\mu(ds)+\frac{1}{4}e^{-4it}\int_\sphere e^{2is}\mu(ds) \ .
\]
In particular,
\[
\frac{1}{4}\int_\sphere e^{2is}\varphi(s)\mu(ds)=
\frac{1}{2}\int_\sphere \mu(ds) \ ,
\]
which is impossible if $\pl \varphi\pl _\infty<2$.
\end{proof}
Let $\mu(ds)=ds$. In view of the above proof, $\varphi(s)=e^{iks}$ with integer $k\neq -2, 0, 2$, yields the zero symbol.
If $\varphi(s)=e^{\pm 2 is}$ then we arrive at $e^{\pm 2i\arg \xi}/2$,
in particular we obtain an elegant representation of (\ref{eq:sBA}).

Let $V$ be the L\'evy measure of a non-zero symmetric $\alpha$-stable L\'evy process in $\Rd$, with $\alpha \in (0,2)$. In polar coordinates we have (see, e.g., \cite{MR1739520}, \cite{MR2320691})
 \begin{equation} \label{eq:spc}
   V(drd\theta) = r^{-1-\alpha}dr\sigma(d\theta) \ , \quad r>0 \ , \ \theta\in \sphere \ ,
 \end{equation}
where the so-called spectral measure $\sigma$ is finite and non-zero on $\sphere$.  Let $\varphi$ be complex-valued on $\sphere$ and such that $ \abs{\phi(\theta)} \leq 1$, $\theta\in \sphere$. Let $\phi(z) = \varphi\left(z/\abs{z}\right)$ for $z\neq 0$, and $c_{\alpha}=\int_0^{\infty} (1-\cos s) s^{-1-\alpha} ds$. By a change of variable,
 \begin{eqnarray} \label{eq:sFs}
  \int_{\Rd} [1-\cos(\xi, z)]\phi(z) V(dz) &=& 
                         \int_{\sphere}\int_0^\infty [1-\cos( \xi, r\theta)]\phi(r\theta) r^{-1-\alpha}dr \ \sigma(d\theta) 
                        \nonumber \\
 &=& c_{\alpha}\int_{\sphere} \abs{(\xi,\theta)}^{\alpha}\varphi(\theta)\sigma(d\theta) \,.
 \end{eqnarray}
Theorem \ref{th:bfm} yields a multiplier bounded in $\Lp$ by $p^*-1$, with the symbol
 \begin{equation}
  M(\xi) = \frac {
                  \int_{\sphere} \abs{(\xi,\theta)}^{\alpha}\varphi(\theta)\sigma(d\theta)
                 }
                 {
                  \int_{\sphere} \abs{(\xi,\theta)}^{\alpha}\sigma(d\theta)
                 } \,.
 \end{equation}
 In particular, for $j=1,\ldots,d$, we obtain 
\begin{equation}
  M(\xi)=\frac{|\xi_j|^\alpha}{|\xi_1|^\alpha+\cdots+|\xi_d|^\alpha}\,,\quad
  \xi=(\xi_1,\ldots,\xi_d)\in \Rd\,.
\end{equation}
These are Marcinkiewicz-type multipliers, as in \cite[p. 110]{MR0290095}.

In the next example we will specialize to $\Rt$. Let $\sigma$ be the Lebesgue measure on the circle, and $\varphi(\theta) = e^{-2i\arg \theta}$, as in the comment following Lemma~\ref{lem:c2}. Let $\xi\in \R^2$ and $t=\arg \xi$. In view of (\ref{eq:sFs}), the numerator of the symbol is
 \begin{eqnarray*}
 c_\alpha\abs{\xi}^{\alpha}\int_0^{2\pi} \abs{\cos (t-s)}^{\alpha}e^{-2is}ds 
&=&  c_\alpha\abs{\xi}^{\alpha}e^{-2it}
\int_0^{2\pi} \abs{\cos v}^{\alpha}e^{2iv}dv \\
 								&=& c_\alpha\abs{\xi}^{\alpha}e^{-2it}\int_0^{2\pi} \abs{\cos v }^{\alpha}\cos(2v)dv \ .
 \end{eqnarray*} 
For $a,b > -1$ we have
 \begin{equation} \nonumber
  \int_{0}^{\frac{\pi}{2}}\sin^av\cos^bv\,dv = \frac{1}{2}\Beta\left(\frac{a+1}{2},\frac{b+1}{2}\right) = \frac{1}{2}\frac{\Gamma\left(\frac{a+1}{2}\right)\Gamma\left(\frac{b+1}{2}\right)}{\Gamma\left(\frac{a+b+2}{2}\right)} \ ,
 \end{equation} 
see, e.g., \cite[Chapter I]{lebedev}. Therefore
 \begin{align*}
&  \int_{0}^{2\pi} \abs{\cos(v)}^{\alpha}\cos(2v)dv = \int_{0}^{2\pi} \abs{\cos(v)}^{\alpha}(2\cos^2 v -1)dv \\
 & = 4\Beta\left(\frac{\alpha+3}{2},\frac12\right) - 2\Beta\left(\frac{\alpha+1}{2},\frac12\right)
  = \frac{2\alpha}{\alpha+2} \Beta\left(\frac{\alpha+1}{2},\frac12\right),
 \end{align*}
  where we used the identity $\Gamma(x+1) = x\Gamma(x)$. Since
 $$
  \int_{0}^{2\pi} \abs{\cos(v)}^{\alpha}dv = 2\Beta\left(\frac{\alpha+1}{2},\frac12\right),
 $$
 we obtain the symbol
\[
  M(\xi) = \frac{\alpha}{\alpha+2} e^{-2i\arg \xi} \ .
\]
For $\alpha\to 2$ we recover the bound $2(p^*-1)$ for the Beurling-Ahlfors transform. 

We will consider more general L\'evy measures in $\Rd$ of product form
in polar coordinates,
 \begin{equation} \label{eq:spcg}
   V(drd\theta) = \rho(dr)\sigma(d\theta) \ , \quad r>0 \ , \ \theta\in \sphere \ .
 \end{equation}
Here $\sigma$ is finite on $\sphere$ and $\int_0^\infty r^2\wedge 1 \  \rho(dr)<\infty$.
An interesting class of such measures are the so-called tempered stable L\'evy processes (\cite{MR2327834}, \cite{MR2591907}). The following example is on the borderline of the tempered stable processes.
Let 
$$\rho(dr)=e^{-r}\frac{dr}{r}\,.$$
In view of the calculations following (\ref{eq:spc})
 we have
$$
\int_0^\infty [1-\cos(\xi,r\eta)]\rho(dr)=\int_0^\infty [1-\cos x]e^{-x/|(\xi,\theta)|}\frac{dx}{x}\,.
$$
The Laplace transform of $(1-\cos x)/x$ equals $0.5 \ln \left(1+s^{-2}\right)$.
Theorem \ref{th:bfm} yields a multiplier bounded in $\Lp$ by $p^*-1$, with the symbol
 \begin{equation}
  M(\xi) = \frac {
                  \int_{\sphere} 
                  \ln \left[1+(\xi,\theta)^{-2}\right]\varphi(\theta)\sigma(d\theta)
                 }
                 {
                  \int_{\sphere} \ln \left[1+(\xi,\theta)^{-2}\right]\sigma(d\theta)
                 } \,,
 \end{equation}
 provided $|\varphi|\leq 1$ on $\sphere$.
For instance, for $j=1,\ldots,d$, we  obtain 
\begin{equation}
  M(\xi)=\frac{\ln \left(1+\xi_j^{-2}\right)}
  {
  \ln \left(1+\xi_1^{-2}\right)+\cdots+
  \ln \left(1+\xi_d^{-2}\right)}\, , \quad \xi \in \Rd \ .
\end{equation}
We conclude with a few general remarks. It is  well known that the stochastic calculus of can be used to obtain non-symmetric Fourier symbols by composing the Brownian motion with harmonic functions, thus by {\it harmonic} rather than parabolic martingales.  This goes back to the pioneering paper of Gundy and Varopoulos \cite{MR545671} for Riesz transform, and we again refer the reader to the survey paper \cite{2010RB} for further discussion.
We also note that McConnell studied in \cite{MR752501} extensions of the 
H\"ormander multiplier theorem. He used the Cauchy process composed with harmonic functions on the upper half-space in $\R^{d+1}$. This may be considered a special case of our parabolic martingales, see \cite[Lemma~2.1]{MR752501}. However, the Cauchy process is obtained by optional stopping of the $(d+1)$-dimensional Brownian motion on the half-space, and so \cite{MR752501} is more related to the work of Gundy and Varopoulos \cite{MR545671} than to the parabolic martingales of Ba\~nuelos and M\'endez-Hernand\'ez \cite{banuelos-mendez}.

It is interesting if the bound for $\varphi$ in the conclusion of Lemma~\ref{lem:scf} may be improved for general complex symmetric matrices $\A$. 
In this connection we also note that if $\mu\geq$ and $\|\phi\|_\infty\leq 1$ (on $\sphere$), then by (\ref{eq:mAB}),
$$
(\mac{A}\xi,\xi)=\int_\sphere (\xi,\theta)^2\varphi(\theta)\mu(d\theta) \quad \mbox{and} \quad
(\mac{B}\xi,\xi)=\int_\sphere (\xi,\theta)^2\mu(d\theta)\ , \quad \xi \in \Rd\ ,
$$
We thus see that $\mac{B}$  is nonnegative definite, and 
\begin{equation}\label{eq:cAB}
|(\mac{A}\xi,\xi)|\leq (\mac{B}\xi,\xi)
 \ , \quad \xi \in \Rd \,. 
\end{equation} 
Of course, (\ref{sub1}) implies (\ref{eq:cAB}) for $\mac{B}=\mac{I}$, but
the relationship between (\ref{eq:cAB}) and the conclusion of Lemma~\ref{lem:scf} calls for further study.

If L\'evy measures satisfy $\nu_1\leq \nu_2$, then 
\begin{equation} \label{eq:fspp}
M\left(\xi\right)=
\frac{
      \int_\Rd \left[1 - \cos (\xi  ,  z) \right]\nu_1(dz)  
          }
     {
           \int_\Rd \left[1 - \cos(\xi  ,  z) \right]\nu_2(dz) 
     } \ ,
\end{equation} 
defines an $\Lp$ multiplier with the norm not exceeding $p^*-1$. This follows from Theorem~\ref{th:bfm} with $V=\nu_2$, $\phi=d\nu_1/d\nu_2$ and $\mu=0$.
The observation allows to study inclusions between anisotropic Sobolev spaces (\cite{MR2245255}).

Surprisingly, non-symmetric L\'evy processes did not bring about non-symmetric symbols here.
We owe to Mateusz Kwa\'snicki yet another explanation of this phenomenon, using time reversal of L\'evy processes (private communication). 
Our present discussion leaves wide open the problem of modifying the jumps of L\'evy processes in such a way as to obtain non-symmetric multipliers. 

An interesting problem, indirectly touched upon by Lemma~\ref{lem:scf}, is the following: 
Can we handle a class of Fourier multipliers on $\Lp$ by specifying the denominator and some boundedness and differentiability properties of the ratio (\ref{eq:fs}), so to recover bounded $\phi$ and  $\varphi$ from these?
\bigskip

\noindent
{\bf Acknowledgement:} {We thank Mateusz Kwa\'snicki  for discussions on symmetrization and Stanis{\l}aw Kwapie\'n for  comments on \cite{MR752501} and remarks on Section~\ref{BMs:p}, which helped to simplify our proofs.  This paper was partially completed during a visit by the first author to Wroc{\l}aw University of Technology and the Mathematical Research and Conference Center in Bedlewo, supported by grant MNiSW N N201 397137.  He gratefully acknowledges these institutions for their warm hospitality.}

\bibliographystyle{abbrv}
\bibliography{martingale_transforms}

\begin{thebibliography}{10}

\bibitem{2010RB}
R.~Ba{\~n}uelos.
\newblock The foundational inequalities of {D. L. Burkholder} and some of their
  ramifications.
\newblock To appear, Illinois Journal of Mathematics, Volume in honor of D.L.
  Burkholder, 2010.

\bibitem{MR2345912}
R.~Ba{\~n}uelos and K.~Bogdan.
\newblock L\'evy processes and {F}ourier multipliers.
\newblock {\em J. Funct. Anal.}, 250(1):197--213, 2007.

\bibitem{MR2386238}
R.~Ba{\~n}uelos and P.~Janakiraman.
\newblock {$L\sp p$}-bounds for the {B}eurling-{A}hlfors transform.
\newblock {\em Trans. Amer. Math. Soc.}, 360(7):3603--3612, 2008.

\bibitem{banuelos-mendez}
R.~Ba{\~n}uelos and P.~J. M{\'e}ndez-Hern{\'a}ndez.
\newblock Space-time {B}rownian motion and the {B}eurling-{A}hlfors transform.
\newblock {\em Indiana Univ. Math. J.}, 52(4):981--990, 2003.

\bibitem{banuelos-wang}
R.~Ba{\~n}uelos and G.~Wang.
\newblock Sharp inequalities for martingales with applications to the
  {B}eurling-{A}hlfors and {R}iesz transforms.
\newblock {\em Duke Math. J.}, 80(3):575--600, 1995.

\bibitem{MR2320691}
K.~Bogdan and P.~Sztonyk.
\newblock Estimates of the potential kernel and {H}arnack's inequality for the
  anisotropic fractional {L}aplacian.
\newblock {\em Studia Math.}, 181(2):101--123, 2007.

\bibitem{Burkholder}
D.~L. Burkholder.
\newblock Sharp inequalities for martingales and stochastic integrals.
\newblock In {\em Colloque Paul L\'evy sur les Processus Stochastiques
  (Palaiseau, 1987)}, number 157-158 in Ast\'erisque, pages 75--94.
  Ast\'erisque, 1988.

\bibitem{MR745449}
C.~Dellacherie and P.-A. Meyer.
\newblock {\em Probabilities and potential. {B}}, volume~72 of {\em
  North-Holland Mathematics Studies}.
\newblock North-Holland Publishing Co., Amsterdam, 1982.
\newblock Theory of martingales, Translated from the French by J. P. Wilson.

\bibitem{elliott}
R.~J. Elliot.
\newblock {\em Stochastic Calculus and Applications}, volume~18 of {\em
  Applications of Mathematics (New York)}.
\newblock Springer-Verlag, Berlin, 1982.

\bibitem{MR2551497}
S.~Geiss, S.~Montgomery-Smith, and E.~Saksman.
\newblock On singular integral and martingale transforms.
\newblock {\em Trans. Amer. Math. Soc.}, 362(2):553--575, 2010.

\bibitem{MR545671}
R.~F. Gundy and N.~T. Varopoulos.
\newblock Les transformations de {R}iesz et les int\'egrales stochastiques.
\newblock {\em C. R. Acad. Sci. Paris S\'er. A-B}, 289(1):A13--A16, 1979.

\bibitem{MR2245255}
N.~Jacob and R.~L. Schilling.
\newblock Towards an {$L\sp p$} potential theory for sub-{M}arkovian
  semigroups: kernels and capacities.
\newblock {\em Acta Math. Sin. (Engl. Ser.)}, 22(4):1227--1250, 2006.

\bibitem{lebedev}
N.~Lebedev.
\newblock {\em Special Functions and their Applications}.
\newblock Dover Publications, Inc., New York, 1972.

\bibitem{MR752501}
T.~R. McConnell.
\newblock On {F}ourier multiplier transformations of {B}anach-valued functions.
\newblock {\em Trans. Amer. Math. Soc.}, 285(2):739--757, 1984.

\bibitem{MR2020294}
P.~E. Protter.
\newblock {\em Stochastic integration and differential equations}, volume~21 of
  {\em Applications of Mathematics (New York)}.
\newblock Springer-Verlag, Berlin, second edition, 2004.
\newblock Stochastic Modelling and Applied Probability.

\bibitem{MR1303781}
D.~Revuz and M.~Yor.
\newblock {\em Continuous martingales and {B}rownian motion}, volume 293 of
  {\em Grundlehren der Mathematischen Wissenschaften [Fundamental Principles of
  Mathematical Sciences]}.
\newblock Springer-Verlag, Berlin, second edition, 1994.

\bibitem{MR1780932}
L.~C.~G. Rogers and D.~Williams.
\newblock {\em Diffusions, {M}arkov processes, and martingales. {V}ol. 2}.
\newblock Cambridge Mathematical Library. Cambridge University Press,
  Cambridge, 2000.
\newblock It{\^o} calculus, Reprint of the second (1994) edition.

\bibitem{MR2327834}
J.~Rosi{\'n}ski.
\newblock Tempering stable processes.
\newblock {\em Stochastic Process. Appl.}, 117(6):677--707, 2007.

\bibitem{MR1739520}
K.-i. Sato.
\newblock {\em L\'evy processes and infinitely divisible distributions},
  volume~68 of {\em Cambridge Studies in Advanced Mathematics}.
\newblock Cambridge University Press, Cambridge, 1999.
\newblock Translated from the 1990 Japanese original, Revised by the author.

\bibitem{MR0290095}
E.~M. Stein.
\newblock {\em Singular integrals and differentiability properties of
  functions}.
\newblock Princeton Mathematical Series, No. 30. Princeton University Press,
  Princeton, N.J., 1970.

\bibitem{MR2591907}
P.~Sztonyk.
\newblock Estimates of tempered stable densities.
\newblock {\em J. Theoret. Probab.}, 23(1):127--147, 2010.

\bibitem{NV}
A.~Volberg and F.~Nazarov.
\newblock Heat extension of the {B}eurling operator and estimates for its norm.
\newblock {\em Algebra i Analiz}, 15(4):142--158, 2003.

\bibitem{MR1334160}
G.~Wang.
\newblock Differential subordination and strong differential subordination for
  continuous-time martingales and related sharp inequalities.
\newblock {\em Ann. Probab.}, 23(2):522--551, 1995.

\end{thebibliography}


\end{document}